\theoremstyle{plain}
\newtheorem{Def}{Definition}[section]
\newtheorem{Prop}[Def]{Proposition}
\newtheorem{Thm}[Def]{Theorem}
\newtheorem{Lem}[Def]{Lemma}
\newtheorem{Cor}[Def]{Corollary}
\newtheorem{Remark}[Def]{Remark}
\numberwithin{equation}{section}
\def\fr#1{\mathfrak{#1}}
\newcommand{\rank}{\mathop{\mathrm{rank}}\nolimits}
\newcommand{\ad}{\mathop{\mathrm{ad}}\nolimits}
\newcommand{\Lie}{\mathop{\mathrm{Lie}}\nolimits}
\newcommand{\Isom}{\mathop{\mathrm{Isom}}\nolimits}
\title[A sufficient condition for congruency of orbits]{
	A sufficient condition for congruency of orbits of Lie groups and some applications}
\author[A.\ Kubo]{Akira Kubo}
\address[A.\ Kubo]{Department of Mathematics, Hiroshima University, Higashi-Hiroshima 739-8526, Japan}
\email{akira-kubo@hiroshima-u.ac.jp}
\author[H.\ Tamaru]{Hiroshi Tamaru}
\address[H.\ Tamaru]{Department of Mathematics, Hiroshima University, Higashi-Hiroshima 739-8526, Japan}
\email{tamaru@math.sci.hiroshima-u.ac.jp}
\keywords{Congruency of orbits, Isometric actions, Symmetric spaces of noncompact type, Parabolic subgroups.}
\thanks{2010 \textit{Mathematics Subject Classification}. 
Primary~57S20, Secondary~53C40, 53C35}
\thanks{The second author was supported in part by KAKENHI (20740040, 24654012).}
\begin{document}

\begin{abstract}
We give a sufficient condition for isometric actions 
to have the congruency of orbits, 
that is, all orbits are isometrically congruent to each other. 
As applications, we give simple and unified proofs for some known congruence results, 
and also provide new examples of isometric actions on symmetric spaces of noncompact type 
which have the congruency of orbits.
\end{abstract}

\maketitle

\section{Introduction} 

Isometric actions of Lie groups on Riemannian manifolds $M$ 
and submanifold geometry of their orbits are fundamental topics in geometry. 
In this paper, we consider isometric actions which have the congruency of orbits, 
that is, all of whose orbits are isometrically congruent to each other. 
The congruency of orbits yields good benefits 
for studying submanifold geometry of orbits, 
since it is sufficient to study only one orbit.
It has been known that the following isometric actions have the congruency of orbits:
\begin{enumerate}
 \item the actions of $\mathrm{U}(1)$ on spheres $\mathbb{S}^{2n+1}$ which induce the Hopf fibrations, 
 \item the actions of $N$ on hyperbolic spaces which induce the horosphere foliations, 
 \item the actions of $S_V$ on symmetric spaces of noncompact type $M = G/K$,
       where $S_V$ are some codimension one subgroups of $AN$ (\cite[Proposition 3.1]{BT03}, see Section~4 for details), and 
 \item the actions of $N$ on symmetric spaces of noncompact type $M=G/K$ which induce horocycle foliations (\cite[Corollary 6.5]{BDT}). 
\end{enumerate}
Note that, for a symmetric space of noncompact type $M$, 
we denote by $G$ the identity component of the isometry group of $M$,
and by $G=KAN$ the Iwasawa decomposition. 

In this paper, we obtain a sufficient condition for isometric actions 
to have the congruency of orbits (Lemma \ref{main}). 
Indeed our sufficient condition
is stated in terms of Lie algebras, and it is very practical to apply. 

The first applications of our sufficient condition are
simple and unified proofs for the congruency of orbits 
for all of the above mentioned actions. 
In Section~3 we show the congruency of orbits in the case of the Hopf fibrations (1).
In Section~4 we 
prove the congruency of orbits for a class of actions, 
which contains the actions (2), (3) and (4). 

As the second application of our sufficient condition, in 
Section~5, we 
provide new examples of isometric actions which have the congruency of orbits. 
Namely, we
prove the congruency of 
orbits
of $S_\Phi$ on symmetric spaces of noncompact type $M = G/K$ 
(Proposition \ref{orbits_S_Phi}), 
where 
$S_\Phi$ denotes the solvable part of 
the
parabolic subgroup 
$Q_\Phi$ of $G$. 
Recently, $S_\Phi$
have played very important roles in studying submanifolds in $M$ 
(\cite{BDT, BT10, T}, 
see also a survey \cite{Tsurvey}). 
Among others, it is remarkable
that the orbit $(S_\Phi).o$ is always minimal in $M$ and 
is Einstein with respect to the induced metric (\cite{T}), 
where $o$ is 
the origin of $M = G/K$. 
Hence, 
as a corollary of the congruency of orbits of $S_{\Phi}$, 
we have the following: 
any orbits of $S_\Phi$ are minimal submanifolds in $M$,
and are Einstein with respect to the induced metrics. 

We are interested in studying further geometric properties of 
$S_{\Phi}$-orbits. 
For the study, the congruency of $S_{\Phi}$-orbits is quite useful, 
since we have only to consider one orbit. 
Furthermore, our sufficient condition would be 
useful
for further studies on isometric actions on symmetric spaces of noncompact type, 
since some interesting actions do satisfy our sufficient condition, 
and hence applicable to study of geometry of their orbits. 

Throughout this paper, 
we denote by $\Isom(M)$ the isometry group of a Riemannian manifold $M$, 
and by $\Lie(G)$ the Lie algebra of a Lie group $G$.

\section{A key lemma} 

In this section, we give a sufficient condition for isometric actions to have the congruency of orbits on Riemannian manifolds.

\begin{Lem} \label{main}
Let $M$ be a Riemannian manifold and
 $S$ be a connected Lie subgroup of $\Isom(M)$ with $\Lie(S) = \fr{s}$, 
 and assume 
 that $S$ acts transitively on $M$.
If $\fr{s}'$ is an ideal of $\fr{s}$, 
 then all orbits of $S'$ in $M$ are isometrically congruent to each other, 
 where $S'$ is the connected Lie subgroup of $S$ with $\Lie(S') = \fr{s}'$.
\end{Lem}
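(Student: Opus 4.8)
The plan is to exploit the transitivity of $S$ together with the standard fact that an ideal in the Lie algebra of a connected group integrates to a normal subgroup. First I would record the group-theoretic input: since $\fr{s}'$ is an ideal of $\fr{s}$ and $S$ is connected, the connected subgroup $S'$ is normal in $S$. The reason is that for any $X \in \fr{s}$ one has $\mathrm{Ad}(\exp X)\fr{s}' = e^{\ad X}\fr{s}' \subseteq \fr{s}'$, because $\fr{s}'$ is $\ad(\fr{s})$-invariant; hence $(\exp X)\,S'\,(\exp X)^{-1}$ is again the connected subgroup with Lie algebra $\fr{s}'$, namely $S'$ itself. Since $S$ is connected it is generated by such exponentials, so $g S' g^{-1} = S'$ for every $g \in S$.

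Next I would use transitivity to compare two arbitrary orbits. Fix points $p, q \in M$ and consider the orbits $S'\!\cdot p$ and $S'\!\cdot q$. Because $S$ acts transitively on $M$, there is some $g \in S$ with $g \cdot p = q$. As $g$ lies in $\Isom(M)$, it is in particular an isometry of $M$, so it suffices to show that $g$ maps $S'\!\cdot p$ onto $S'\!\cdot q$.

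The final computation is then immediate from normality:
\[
 g\,(S'\!\cdot p) = (g S' g^{-1}) \cdot (g \cdot p) = S'\!\cdot q,
\]
where the middle step merely rewrites $g S' p$ and the last step uses $g S' g^{-1} = S'$ together with $g \cdot p = q$. Thus the single isometry $g$ carries $S'\!\cdot p$ onto $S'\!\cdot q$, which is exactly the assertion that the two orbits are isometrically congruent; since $p$ and $q$ were arbitrary, all $S'$-orbits are mutually congruent.

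I expect the only genuinely substantive step to be the first one, the passage from the ideal condition on $\fr{s}'$ to the normality of $S'$ in $S$; everything afterwards is a one-line consequence of transitivity and of $g$ being an isometry. The connectedness hypotheses on $S$ (and on $S'$) are precisely what make this passage valid, so I would take care to invoke them explicitly rather than treat the normality as automatic.
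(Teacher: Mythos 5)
Your proposal is correct and follows essentially the same route as the paper's own proof: transitivity supplies $g \in S$ carrying one point to the other, the ideal hypothesis together with connectedness gives normality of $S'$ in $S$, and the one-line conjugation computation $g\,(S'\!\cdot p) = (g S' g^{-1})\cdot(g\cdot p) = S'\!\cdot q$ finishes the argument. The only difference is cosmetic: you sketch the proof that an ideal integrates to a normal subgroup via $\mathrm{Ad}(\exp X) = e^{\ad X}$, whereas the paper simply cites Warner's book for this standard fact.
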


\begin{proof}
Take any $p, q \in M$.
We shall show that the orbits $S' . p $ and $S' . q $ are isometrically congruent.
Owing to transitivity of the action of $S$, there exists $g \in S$ such that $p = g.q$ holds.
Since
 $S$ and $S'$ are connected and $\fr{s'}$ is an ideal in $\fr{s}$, 
 one knows that
 $S'$ is a normal subgroup of $S$ (see for instance \cite[Theorem 3.48]{W}). 
Hence one has $g^{-1} S' g = S'$. 
Thus we obtain 
	\begin{align*}
        g. (S'.q) = g. (g^{-1} S' g). q = S'. (g. q) = S'.p, 
        \end{align*}
 which implies $S' . p $ and $S' . q $ are isometrically congruent. 
\end{proof}

\section{Hopf fibrations} 

In this section, 
by applying
Lemma~\ref{main}, 
 we give a simple proof for
the congruency of orbits of the actions 
$\mathrm{U}(1)$ on spheres $\mathbb{S}^{2n+1}$
which induce Hopf fibrations.

Let $\mathbb{S}^{2n+1}$ be the unit sphere in $\mathbb{C}^{n+1}$. 
Consider
the natural action of the unitary group $\mathrm{U}(n+1)$ on $\mathbb{S}^{2n+1}$, 
and let $\mathrm{U}(1)$ be the center of $\mathrm{U}(n+1)$. 
It is well-known that the orbit space of the action of $\mathrm{U}(1)$ 
on $\mathbb{S}^{2n+1}$ satisfies
\begin{align*}
\mathrm{U}(1) \backslash \mathbb{S}^{2n+1} 
= \mathrm{U}(1) \backslash \mathrm{U}(n+1) / \mathrm{U}(n) 
= \mathrm{U}(n+1) / ( \mathrm{U}(1) \times \mathrm{U}(n)) 
= \mathbb{C}\textrm{P}^n . 
\end{align*}
The natural projection 
from $\mathbb{S}^{2n+1}$ onto $\mathbb{C}\textrm{P}^n$ provides
the \textit{Hopf fibration}. 

We now show the congruency of orbits of the action above, as an application of Lemma~\ref{main}.

\begin{Prop} \label{Hopf}
Under the action of $\mathrm{U}(1)$ on $\mathbb{S}^{2n+1}$ defined above, all orbits of $\mathrm{U}(1)$ in $\mathbb{S}^{2n+1}$ are isometrically congruent to each other.
\end{Prop}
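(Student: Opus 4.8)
The plan is to apply Lemma~\ref{main} directly, taking $M = \mathbb{S}^{2n+1}$, the group $S = \mathrm{U}(n+1)$ with its natural action, and the subgroup $S' = \mathrm{U}(1)$ given by the center. To set this up I need to check the hypotheses of the lemma in this concrete situation: that $\mathrm{U}(n+1)$ is a connected subgroup of $\Isom(\mathbb{S}^{2n+1})$ acting transitively, and that $\mathrm{U}(1)$ is exactly the connected Lie subgroup attached to an ideal of $\mathfrak{u}(n+1) = \Lie(\mathrm{U}(n+1))$. Once these are in place, the congruency of orbits is an immediate consequence.

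First I would record the geometric input. The natural action of $\mathrm{U}(n+1)$ on $\mathbb{C}^{n+1}$ preserves the Hermitian inner product, hence the underlying Euclidean metric, and therefore restricts to an isometric action on the unit sphere $\mathbb{S}^{2n+1}$; transitivity on the sphere is standard, since any unit vector can be carried to any other by a unitary transformation. Thus $\mathrm{U}(n+1)$ is a connected subgroup of $\Isom(\mathbb{S}^{2n+1})$ acting transitively, which plays the role of $S$ in Lemma~\ref{main}.

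Next I would supply the Lie-algebra data. Write $\mathfrak{s} = \mathfrak{u}(n+1)$ and $\mathfrak{s}' = \Lie(\mathrm{U}(1))$. The essential point is that $\mathrm{U}(1)$ is the center of $\mathrm{U}(n+1)$, so $\mathfrak{s}'$ is precisely the center of $\mathfrak{u}(n+1)$, namely the line of imaginary scalar matrices $\mathbb{R}\cdot i I$; in particular $\mathfrak{s}'$ is one-dimensional, matching $\dim \mathrm{U}(1) = 1$. Since $\mathfrak{s}'$ is the center of $\mathfrak{s}$, we have $[\mathfrak{s},\mathfrak{s}'] = \{0\} \subseteq \mathfrak{s}'$, so $\mathfrak{s}'$ is automatically an ideal of $\mathfrak{s}$. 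This is the single structural fact the argument rests on, and it needs no computation beyond recognizing that the center of a Lie algebra is always an ideal.

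With both hypotheses verified, Lemma~\ref{main} applies and yields directly that all orbits of $S' = \mathrm{U}(1)$ in $\mathbb{S}^{2n+1}$ are isometrically congruent to each other, which is the assertion. I do not expect a genuine obstacle here; the proof is essentially a dictionary translation into the setting of the key lemma. The only point that warrants a line of care is the convention-matching in the previous paragraph: confirming that the circle group $\mathrm{U}(1)$ really is the connected subgroup $S'$ associated to the ideal $\mathfrak{s}'$, i.e.\ that its Lie algebra is the full center rather than a proper subalgebra, which is immediate from both being one-dimensional and central.
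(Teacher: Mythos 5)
Your proposal is correct and follows essentially the same route as the paper: both apply Lemma~\ref{main} with $S = \mathrm{U}(n+1)$ acting transitively by isometries on $\mathbb{S}^{2n+1}$ and $S' = \mathrm{U}(1)$, using that $\mathfrak{u}(1)$ is an ideal (indeed the center) of $\mathfrak{u}(n+1)$. Your write-up simply fills in the verification of the hypotheses in more detail than the paper's brief proof.
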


\begin{proof}
Recall that $\mathrm{U}(n+1)$ acts transitively on $\mathbb{S}^{2n+1}$.
We know that $\mathfrak{u}(1)$ is an ideal in $\mathfrak{u}(n+1)$, or that $\mathrm{U}(1)$ is a normal subgroup of $\mathrm{U}(n+1)$.
Hence 
the proof easily follows from 
Lemma~\ref{main}. 
\end{proof}

When $n=1$, the proof of Proposition~\ref{Hopf} can be found, 
for example, in \cite[Section 2]{S}. 
We note that its proof depends on the fact that 
$\mathbb{S}^{3}$ can be identified with $\mathrm{Sp}(1)$ 
equipped with the bi-invariant metric. 
Hence, our sufficient condition gives another proof, 
which can also be applied to an arbitrary $n$. 

\section{Horospheres and their generalizations} 

In this section, we give 
further applications
of Lemma~\ref{main}, 
which 
provide
the congruency of orbits of certain isometric actions 
on Riemannian symmetric spaces of noncompact type and of arbitrary rank. 
The result of this section
contains, as special cases,
simple and unified proofs of
the congruency of orbits of (2), (3) and (4) mentioned in Section~1.

First of all, we recall some fundamental notions of symmetric spaces of noncompact type. 
Refer to \cite{H, K}. 
Let $M = G/K$ be a connected Riemannian symmetric space of noncompact type, 
 where $G$ is the identity component of $\Isom(M)$, 
 and $K$ is the isotropy subgroup of $G$ at some point $o$, called the origin.
Let us denote by $\fr{g}$ and $\fr{k}$ the Lie algebras of $G$ and $K$, respectively,
 and by $\fr{p}$ the orthogonal complement of $\fr{k}$ with respect to the Killing form $B$ of $\fr{g}$.
One thus obtains that $\fr{g} = \fr{k} \oplus \fr{p}$, the Cartan decomposition of $\fr{g}$.
Denote by $\theta$ the corresponding Cartan involution.
We then introduce a positive definite inner product on $\fr{g}$ by $\langle X, Y \rangle := -B(X, \theta Y)$. 

Let $\fr{a}$ be a maximal abelian subspace of $\fr{p}$ and denote the dual space of $\fr{a}$ by $\fr{a}^*$.
Then we define 
	\begin{align*}
	\fr{g}_\lambda := \{ X \in \fr{g} : 
	\ad(H)X = \lambda(H)X \text{ for all } H \in \fr{a} \}
	\end{align*} 
 for each $\lambda \in \fr{a}^*$, 
 and call $\lambda \in \fr{a}^* \setminus \{ 0 \}$ the \textit{restricted root} if $\fr{g}_\lambda \ne {0}$.
Denote by $\Sigma$ the set of restricted roots.
Let $\Lambda$ be a set of simple roots of $\Sigma$,
 and then denote by $\Sigma^+$ the set of positive roots associated with $\Lambda$.

Let us define 
	\begin{align*} \textstyle 
	\fr{n} := \bigoplus_{\lambda \in \Sigma^+} \fr{g}_\lambda , \quad 
	\fr{s} := \fr{a} \oplus \fr{n} , 
	\end{align*}
which yields that $\fr{g} = \fr{k} \oplus \fr{a} \oplus \fr{n}$, 
the Iwasawa decomposition of $\fr{g}$. 
Note that
$\fr{n}$ is a nilpotent subalgebra and $\fr{s}$ is a solvable subalgebra. 
Furthermore, one can check easily that
	\begin{align} \label{eq_2_1}
	[\fr{s}, \fr{s}] = \fr{n}.
	\end{align}
Let $S$ be the connected Lie subgroup of $G$ with $\Lie(S) = \fr{s}$.
This solvable subgroup $S$ is simply-connected and acts simply transitively on $M$.

We now give examples of isometric actions on $M$, which have the congruency of orbits. 
Denote by $\ominus$ the orthogonal complement with respect to $\langle , \rangle$.

\begin{Prop} \label{orbits_S_V}
Let $V$ be any linear subspace of $\fr{a}$ and 
 define $\fr{s}_V := \fr{s} \ominus V = (\fr{a} \ominus V) \oplus \fr{n}$.
Then all orbits of $S_V$ in $M$ are isometrically congruent to each other, 
 where $S_V$ is the connected Lie subgroup of $S$ with $\Lie(S_V) = \fr{s}_V$.
\end{Prop}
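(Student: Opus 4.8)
The plan is to apply Lemma~\ref{main} directly, taking $S$ to be the full solvable group with $\Lie(S) = \fr{s}$ and setting $S' := S_V$. The transitivity hypothesis of Lemma~\ref{main} is already met, since $S$ acts (simply) transitively on $M$. Consequently the entire geometric statement reduces to a single algebraic verification: that $\fr{s}_V$ is an ideal of $\fr{s}$.

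To establish this, I would invoke the structural identity \eqref{eq_2_1}, that is, $[\fr{s}, \fr{s}] = \fr{n}$. By construction $\fr{s}_V = (\fr{a} \ominus V) \oplus \fr{n}$ contains $\fr{n}$ and is contained in $\fr{s}$, so that
\begin{align*}
[\fr{s}, \fr{s}_V] \subseteq [\fr{s}, \fr{s}] = \fr{n} \subseteq \fr{s}_V .
\end{align*}
This is precisely the assertion that $\fr{s}_V$ is an ideal of $\fr{s}$, and it holds for \emph{every} subspace $V \subseteq \fr{a}$, regardless of how $V$ sits inside $\fr{a}$, because the bracket pushes everything into $\fr{n}$ anyway.

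With the ideal condition in hand, Lemma~\ref{main} immediately yields that all orbits of $S_V$ in $M$ are isometrically congruent to each other, which is the desired conclusion. I do not expect any genuine obstacle in this argument: the whole proof hinges on the inclusion $\fr{n} \subseteq \fr{s}_V$ together with \eqref{eq_2_1}, so the real content was already packaged into the identity $[\fr{s}, \fr{s}] = \fr{n}$. This is exactly the kind of situation for which the sufficient condition of Lemma~\ref{main} was designed, reducing a congruency statement about orbits to a one-line computation with brackets.
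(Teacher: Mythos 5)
Your proof is correct and follows exactly the paper's own argument: apply Lemma~\ref{main} with $S' = S_V$, using the transitivity of $S$ and the chain $[\fr{s}, \fr{s}_V] \subset [\fr{s}, \fr{s}] = \fr{n} \subset \fr{s}_V$ from \eqref{eq_2_1} to verify the ideal condition. Nothing is missing.
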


\begin{proof}
Recall that $S$ acts transitively on $M$. 
Then, by Lemma~\ref{main}, we have only to prove that $\fr{s}_V$ is an ideal in $\fr{s}$.
It follows easily from (\ref{eq_2_1}) and the definitions of $\fr{s}$ and $\fr{s}_V$ that 
	\begin{align*}
    	[\fr{s}, \fr{s}_V] \subset [\fr{s}, \fr{s}] = \fr{n} \subset \fr{s}_V . 
        \end{align*}
Thus $\fr{s}_V$ is an ideal in $\fr{s}$. 
This completes the proof.
\end{proof}

The actions of $S_V$ on $M$ have been studied in \cite{BDT}, 
 and are always hyperpolar. 
Furthermore, the class of $S_V$-actions contains some interesting subclasses. 

\begin{Remark}
Although the proof of Proposition~\ref{orbits_S_V} is very easy, 
it gives simple and unified proofs of some known results as follows. 
\begin{enumerate}
\item
In the case when $\rank M = 1$ and $\dim V = 1$, 
 Proposition~\ref{orbits_S_V} gives a proof of the congruency of horospheres
in hyperbolic spaces. 
Indeed horospheres coincide with $N$-orbits, 
 where $N$ stands for the connected Lie subgroup of $G$ with $\Lie(N) = \fr{n}$.
Hence, setting $V := \fr{a}$ we have $N = S_V$.

\item
In the case when $\rank M > 1$ and $\dim V = 1$, 
Proposition~\ref{orbits_S_V} gives another proof of \cite[Proposition 3.1]{BT03}. 
We note that the proof in \cite{BT03} involves some geometric arguments, 
 but our proof is purely Lie algebraic.
We also note that the action of $S_V$ on $M$ is of cohomogeneity one.

\item
In the case when $\rank M > 1$ and $\dim V = \rank M$, 
 Proposition~\ref{orbits_S_V} also gives another proof of \cite[Corollary 6.5]{BDT}. 
Note that $S_V = N$ in this case, whose orbits form the horocycle foliation. 
\end{enumerate}
\end{Remark}

In the case when $\rank M > 1$ and $\dim V$ generic,
 Proposition~\ref{orbits_S_V} is a slight extension of the above mentioned results.

\section{The solvable parts of parabolic subgroups} 

In this section we introduce 
 new
examples of isometric actions having the congruency of orbits on Riemannian symmetric spaces of noncompact type.
They are induced by the solvable parts of parabolic subalgebras. 

We first review parabolic subalgebras (we refer to \cite{K}).
We use the notations in Section~4. 
It is known that there
is a one-to-one correspondence between 
proper
subsets $\Phi \subsetneq \Lambda$ 
 and the conjugacy classes of parabolic subalgebras of $\fr{g}$. 
The correspondence is given as follows. 
For each $\Phi \subsetneq \Lambda$,
 let $\Sigma_\Phi$ be the root subsystem of $\Sigma$ generated by $\Phi$, 
 that is, $\Sigma_\Phi$ is the intersection of $\Sigma$ and the linear span of $\Phi$,
 and put $\Sigma_\Phi^+ := \Sigma_\Phi \cap \Sigma^+$. 
Then, let us define
	\begin{align*}
	\textstyle 
	\fr{q}_\Phi := \fr{g}_0 \oplus 
	\left( \bigoplus_{\beta \in \Sigma_\Phi \cup \Sigma^+} \fr{g}_\beta \right) , 
	\end{align*}
 which is a parabolic subalgebra of $\fr{g}$.
It then is clear but remarkable that 
	\begin{align} \label{eq_3_1}
		\fr{s} \subset \fr{q}_\Phi.
	\end{align}
We shall give the solvable part of $\fr{q}_\Phi$. 
Consider the \textit{Langlands decomposition} $\fr{q}_\Phi = \fr{m}_\Phi \oplus \fr{a}_\Phi \oplus \fr{n}_\Phi$, 
where 
	\begin{align*}
	\fr{a}_\Phi 
	& = \{ H \in \fr{a} : \alpha(H) = 0 \text{ for all } \alpha \in \Phi \} , \\ 
	\fr{m}_\Phi 
	& = (\fr{g}_0 \ominus \fr{a}_\Phi) \oplus 
	\textstyle 
	( \bigoplus_{\beta \in \Sigma_\Phi^+} \fr{g}_\beta) , \\
	\fr{n}_\Phi 
	& = \textstyle 
	\bigoplus_{\lambda \in \Sigma^+ \setminus \Sigma_\Phi^+} \fr{g}_\lambda . 
	\end{align*}
Let us define $\fr{s}_\Phi := \fr{a}_\Phi \oplus \fr{n}_\Phi$, 
which is called the solvable part of the parabolic subalgebra $\fr{q}_\Phi$. 
By definition, one has
\begin{align*}
\fr{s}_\Phi \subset \fr{s} . 
\end{align*}
Furthermore, it follows 
from \cite[Proposition 7.78]{K} that $\fr{s}_\Phi$ is 
an ideal
in $\fr{q}_\Phi$, that is, 
	\begin{align} \label{eq_3_2}
		[\fr{s}_\Phi , \fr{q}_\Phi] \subset \fr{s}_\Phi.
	\end{align}
We are now in the position to state the main result of this section.

\begin{Thm} \label{orbits_S_Phi}
Let $\Phi \subsetneq \Lambda$, 
 and $\fr{s}_\Phi$ be the solvable part of 
 the
parabolic subalgebra $\fr{q}_\Phi$.
Then all orbits of $S_\Phi$ in $M$ are isometrically congruent to each other, 
 where $S_\Phi$ is the connected Lie subgroup of $S$ with $\Lie(S_\Phi) = \fr{s}_\Phi$.
\end{Thm}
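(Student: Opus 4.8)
The plan is to deduce the statement directly from the key Lemma~\ref{main}, applied with the transitively acting group $S$ and the distinguished subgroup $S' = S_\Phi$. Recall that $S$ is connected, is contained in $G \subset \Isom(M)$, and acts (simply) transitively on $M$; moreover $S_\Phi$ is by construction the connected subgroup of $S$ with $\Lie(S_\Phi) = \fr{s}_\Phi$, and $\fr{s}_\Phi \subset \fr{s}$. Thus the transitivity and the subgroup structure required by Lemma~\ref{main} are already in place, and the whole proof reduces to verifying the single algebraic condition that $\fr{s}_\Phi$ is an ideal of $\fr{s}$.

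To establish this, I would simply chain the two inclusions already recorded above. By~(\ref{eq_3_1}) we have $\fr{s} \subset \fr{q}_\Phi$, and by~(\ref{eq_3_2}) the subalgebra $\fr{s}_\Phi$ is an ideal of the parabolic subalgebra $\fr{q}_\Phi$. Combining these,
\begin{align*}
[\fr{s}, \fr{s}_\Phi] \subset [\fr{q}_\Phi, \fr{s}_\Phi] \subset \fr{s}_\Phi,
\end{align*}
so that $\fr{s}_\Phi$ is indeed an ideal of $\fr{s}$. Lemma~\ref{main} then immediately yields that all orbits of $S_\Phi$ in $M$ are isometrically congruent to each other.

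In short, the entire argument is a two-line bracket computation feeding into Lemma~\ref{main}, and there is no genuine obstacle, since the substantive facts have been quarantined into the citations. The only points I would take care to confirm are that~(\ref{eq_3_2}), drawn from~\cite[Proposition 7.78]{K}, really asserts ideal-ness of $\fr{s}_\Phi$ inside the \emph{full} parabolic $\fr{q}_\Phi$ (and not merely inside some Levi or nilradical piece), and that the inclusion~(\ref{eq_3_1}) holds as stated; granting these, the computation $[\fr{s}, \fr{s}_\Phi] \subset [\fr{q}_\Phi, \fr{s}_\Phi] \subset \fr{s}_\Phi$ goes through verbatim. This is the same mechanism used to prove Proposition~\ref{orbits_S_V}, the difference being only that there the ideal property followed from $[\fr{s}, \fr{s}] = \fr{n}$, whereas here it is inherited from the larger ambient parabolic $\fr{q}_\Phi$.
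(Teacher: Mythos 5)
Your proof is correct and is essentially identical to the paper's: both reduce the theorem via Lemma~\ref{main} to showing $\fr{s}_\Phi$ is an ideal of $\fr{s}$, and both verify this by the same chain $[\fr{s}, \fr{s}_\Phi] \subset [\fr{q}_\Phi, \fr{s}_\Phi] \subset \fr{s}_\Phi$ using (\ref{eq_3_1}) and (\ref{eq_3_2}).
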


\begin{proof}
As in Proposition~\ref{orbits_S_V}, 
we have only to check that $\fr{s}_\Phi$ is an ideal in $\fr{s}$. 
Indeed, 
it follows from (\ref{eq_3_1}) and (\ref{eq_3_2}) that 
	\begin{align}
        	[\fr{s} , \fr{s}_\Phi] \subset [\fr{q}_\Phi , \fr{s}_\Phi] \subset \fr{s}_\Phi,
        \end{align}
which completes the proof. 
\end{proof}

The proof of Theorem~\ref{orbits_S_Phi} is 
 easy, 
but it provides a lot of new examples of isometric actions on $M$
which have the congruency of orbits. 
Note that there are $2^r - 1$ proper subsets in $\Lambda$, 
where $r$ denotes 
the rank of $M$. 

We also note that the orbits $(S_\Phi).o$ through the origin $o$ have been studied in \cite{T}. 
Indeed, it is proved that $(S_\Phi).o$ is always minimal in $M$ and is Einstein with respect to the induced metric. 
Hence, combining these facts with Theorem ~\ref{orbits_S_Phi}, we readily obtain the following result. 

\begin{Cor}\label{Cor}
All orbits of $S_\Phi$ are minimal in $M$, 
and Einstein with respect to the induced metrics. 
\end{Cor}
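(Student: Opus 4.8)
The plan is to combine the two ingredients already in hand: the result of \cite{T} concerning the single orbit through the origin, and the congruency established in Theorem~\ref{orbits_S_Phi}. The guiding principle is that both properties in question---minimality, and being Einstein with respect to the induced metric---are invariants of a submanifold under isometries of the ambient space. Hence it suffices to verify them for one orbit and then transport them to every other orbit via the congruence.

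First I would recall from \cite{T} that the particular orbit $(S_\Phi).o$ through the origin $o$ is a minimal submanifold of $M$ and is Einstein for the metric induced from $M$. This is the base case supplying the geometry; everything else is propagation. Next, given an arbitrary point $p \in M$, Theorem~\ref{orbits_S_Phi} furnishes an isometry $\phi \in \Isom(M)$ (indeed an element of $S$, exactly as in the proof of Lemma~\ref{main}) with $\phi((S_\Phi).o) = S_\Phi.p$.

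Since $\phi$ is an isometry of the ambient space, it carries the second fundamental form, and therefore the mean curvature vector, of $(S_\Phi).o$ to those of $S_\Phi.p$; in particular the vanishing of the mean curvature is preserved, so $S_\Phi.p$ is again minimal. Likewise $\phi$ restricts to an isometry between $(S_\Phi).o$ and $S_\Phi.p$ endowed with their induced metrics, and the Einstein condition is an intrinsic Riemannian invariant preserved by such a restriction, so $S_\Phi.p$ is Einstein as well. As $p$ was arbitrary, every orbit of $S_\Phi$ is minimal and Einstein, which is the assertion of the corollary.

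The argument is essentially immediate, and I do not anticipate a genuine obstacle. The only point deserving a word of care is the explicit observation that minimality and the Einstein property descend along the congruence---that is, that they are preserved under an isometry of $M$ which restricts to an isometry of the orbits. Once this is made explicit, the corollary follows at once by combining \cite{T} with Theorem~\ref{orbits_S_Phi}.
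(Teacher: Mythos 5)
Your proposal is correct and follows exactly the paper's argument: the paper likewise combines the result of \cite{T} for the orbit $(S_\Phi).o$ through the origin with the congruency of orbits from Theorem~\ref{orbits_S_Phi}. You merely make explicit the (standard) observation that minimality and the Einstein condition are preserved under ambient isometries, which the paper leaves implicit in the phrase ``we readily obtain.''
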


\end{document}